\colorlet{shadecolor}{gray!15}
\renewenvironment{abstract}
  {\begin{center}\bfseries Abstract\end{center}%
   \begin{quote}\small}
  {\end{quote}\par\vspace{1em}}
\theoremstyle{definition}
\newtheorem{definition}{Definition}[section] %originally was subsection
\newtheorem*{remark}{Remark}
\newtheorem{theorem}{Theorem}[section] %originally was subsection
\newtheorem{corollary}{Corollary}[theorem]
\newtheorem{lemma}[theorem]{Lemma}
\newtheorem{Remark}[theorem]{Remark}
\numberwithin{equation}{section}%subsection originally
\DeclareFontFamily{T1}{calligra}{}
\DeclareFontShape{T1}{calligra}{m}{n}{<->s*[1.44]callig15}{}
\DeclareMathAlphabet\mathcalligra   {T1}{calligra} {m} {n}
\DeclareMathAlphabet\mathzapf       {T1}{pzc} {mb} {it}
\DeclareMathAlphabet\mathchorus     {T1}{qzc} {m} {n}
\DeclareMathAlphabet\mathrsfso      {U}{rsfso}{m}{n}
\pgfplotsset{compat=newest}
\begin{document}
\begin{center}
\rule{\textwidth}{0.4pt} \\[0.6cm]

{\Large \textbf{Reverse Isoperimetric Properties of \textit{Thick} $\lambda$-Concave Bodies in the Hyperbolic Plane}\par}

\vspace{0.6cm}

\rule{\textwidth}{0.4pt} \\[1.2cm]

{\textbf{Maria Esteban}\par}

\vspace{0.4cm}

%{\textit{Affiliations}\par}
\end{center}

\vspace{0.5cm}

\begin{abstract}
\sloppy
In this paper we address the reverse isoperimetric inequality for convex bodies with uniform curvature constraints in the hyperbolic plane $\mathbb{H}^2$. We prove that the\textit{ thick $\lambda$-sausage} body, that is, the convex domain bounded by two equal circular arcs of curvature $\lambda$ and two equal arcs of hypercircle of curvature  $1  / \lambda$, is the unique minimizer of area among all bodies  $K \subset \mathbb{H}^2$ with a given length and with curvature of $\partial K$ satisfying $1  / \lambda \leq \kappa \leq \lambda$ (in a weak sense). We call this class of bodies \textit{thick $\lambda$-concave} bodies, in analogy to the Euclidean case where a body is $\lambda$-concave if $0 \leq \kappa \leq \lambda$. The main difficulty in the hyperbolic setting is that the inner parallel bodies of a convex body are not necessarily convex. To overcome this difficulty, we introduce an extra assumption of thickness $\kappa \geq 1/\lambda$. 
\end{abstract}

\textit{Keywords:} reverse isoperimetric inequality, $\lambda$-concavity, $\lambda$-convexity, hyperbolic plane

\begin{comment}
\newline
In addition, we prove the Blaschke's rolling theorem for $\lambda$-concave bodies under the thickness assumption.
That is, we prove that a ball of curvature $\lambda$ can roll freely inside a thick $\lambda$-concave body. 
In striking contrast to the Euclidean setting, Blaschke's rolling theorem for $\lambda$-concave domains in $\mathbb{H}^2$ does not hold in general, and thus has not been studied in literature before. We address this gap, and show that the thickness assumption is necessary and sufficient for such a theorem to hold
\end{comment}

\section{Introduction}

The reverse isopermietric problem is inspired from the classical isoperimetric problem which has been thoroughly studied in the past and generalized in a variety of contexts (see \cite{Ros2}). The classical problem concerns with finding the body with the largest volume for a given surface area. In $\mathbb{R}^n$  it is well known that the solution is an $n$-dimensional ball. The reverse isoperimetric problem addresses the opposite case. That is, it searches for the body that minimizes the volume for a given surface area.  This problem has a trivial solution as, for a given surface area, the $n$-dimensional volume can be arbitrary small (the \textit{flat pancake problem} \cite{ball}). 
\par
To avoid this degeneration, in this paper we restrict the reverse isoperimetric problem to convex bodies with uniform curvature constraints. 
Our focus will be on convex bodies lying in the hyperbolic plane $\mathbb{H}^2$, which we assume has constant curvature of $-1$. Recall that a set is called convex if the geodesic segment between any two points in it, lies inside. A convex body is a compact convex set with non-empty interior. In particular, we look at convex bodies $K \subset \mathbb{H}^2$ whose boundary curvature satisfies $1 / \lambda  \leq \kappa \leq \lambda$, in a weak sense (see below).  We call these \textit{thick $\lambda$-concave bodies}. This curvature constraint restricts the body from being too flat or too curved, ruling out trivial solutions such as the ``flat pancake shape''.  

\subsection{Some reverse isoperimetric results}

In this paper, we focus on $\lambda$-concave and $1/\lambda$-convex bodies. Throughout, we assume that $\lambda > 1$ and that the normal vector to the boundary $\partial K$ of a convex body $K$ points in the direction of convexity. In addition, given a convex body $K$,  $|\partial K|$ refers to the length (or surface area) of the boundary $\partial K$ and $|K|$ to the area (or volume) of $K$.

\begin{definition}[{$\lambda$-convex and $\lambda$-concave bodies in $\mathbb{H}^2$ \cite{kostya}}]
A convex body \( K \subset \mathbb{H}^2 \) is $\lambda$-convex if for each $  x \in \partial K $ there exists a neighborhood $ U_x \subset \mathbb{H}^2 $ and a closed curve  $\partial S_\lambda$ of constant curvature $\lambda$ passing through $x$ (see Section \ref{2.1}) such that
\[ U_x \cap \partial K \subset S_\lambda.\]
where  $ S_\lambda$ is the convex region bounded by  $\partial S_\lambda$. Similarly, $K$ is  $\lambda$-concave if for each $x \in \partial K $ there exists a neighborhood $ U_x \subset \mathbb{H}^2 $ such that 
\[ U_x \cap \partial S_{\lambda} \subset K.\]
Equivalently, if $\partial K$ is $C^2$-smooth, then $K$ is $\lambda$-convex when the geodesic curvature $\kappa_g$ satisfies 
$\kappa_g(x) \geq \lambda$ for all $ x \in \partial K$, and $\lambda$-concave when the geodesic curvature $\kappa_g$ satisfies $0 \leq \kappa_g(x) \leq \lambda $ for all $x \in \partial K$. 

\begin{definition} [Thick $\lambda$-concave bodies]
A convex body \( K \subset \mathbb{H}^2 \) is thick $\lambda$-concave if the geodesic curvature $\kappa_g$ satisfies $1  / \lambda \leq \kappa \leq \lambda$ (in a weak sense).
\end{definition}

\end{definition}

Some results on the isoperimetric problem of $\lambda$-convex and $\lambda$-concave bodies have already been obtained in $\mathbb{R}^n$.

\begin{theorem} [Reverse Isoperimetric Inequality for $\lambda$-concave Bodies] (\cite{kostya3}, see also \cite{saorin, nayar}). Let $n \geq 2$. Let $K \subset \mathbb{R}^n$ be a $\lambda$-concave body, and let $S \subset \mathbb{R}^n$ be a $\lambda$-sausage, i.e., the convex hull of two balls of radius $1/\lambda$. If $|\partial K| = |\partial S|$, then $|K| \geq |S|$, and equality holds if and only if $K$ is a $\lambda$-sausage.
\end{theorem}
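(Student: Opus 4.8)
The plan is to reduce the reverse isoperimetric problem to the classical Steiner formula by exploiting the geometric content of the upper curvature bound. The condition $\kappa_g \le \lambda$ is precisely the hypothesis of Blaschke's rolling theorem for the Euclidean sphere of radius $r := 1/\lambda$: a ball of radius $r$ can be rolled freely inside $K$, touching every boundary point. Consequently $K$ is the outer parallel body (at distance $r$) of its inner parallel body $C := K \ominus B_r = \{x : x + B_r \subseteq K\}$, that is,
\[
  K = C \oplus B_r ,
\]
where $C$ is again a convex body (an erosion of a convex body by a ball), possibly lower-dimensional. The $\lambda$-sausage $S$ is exactly the case in which the core $C$ degenerates to a line segment, since the convex hull of two balls of radius $r$ equals $\Sigma \oplus B_r$ with $\Sigma$ the segment joining their centers. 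Thus it suffices to prove that, among all admissible cores $C$, the segment minimizes $|K|$ for fixed $|\partial K|$.

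First I would write both functionals in terms of the intrinsic volumes $V_0(C), \dots, V_n(C)$ of the core via the Steiner formula,
\[
  |K| = \sum_{j=0}^{n} \omega_{n-j}\, V_j(C)\, r^{n-j}, \qquad
  |\partial K| = \frac{d}{dr}|K| = \sum_{j=0}^{n-1} (n-j)\,\omega_{n-j}\, V_j(C)\, r^{n-j-1},
\]
where $\omega_k$ denotes the volume of the unit ball in $\mathbb{R}^k$. The crucial structural observation is that $V_n(C) = |C|$ appears in $|K|$ but, carrying the coefficient $(n-n)=0$, is absent from $|\partial K|$; hence it can only help to set $V_n(C) = 0$, i.e.\ to take the core degenerate.

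The heart of the argument is then a coefficientwise comparison. For each index $j$ the marginal ratio of volume to surface area contributed by $V_j(C)$ is
\[
  \frac{\omega_{n-j}\, r^{n-j}}{(n-j)\,\omega_{n-j}\, r^{n-j-1}} = \frac{r}{n-j} \ \ge\ \frac{r}{n-1} \qquad (j \ge 1),
\]
with equality if and only if $j=1$. Since every $V_j(C) \ge 0$, multiplying by $(n-j)\,\omega_{n-j}\,V_j(C)\,r^{n-j-1}$ and summing over $1 \le j \le n-1$ bounds the part of $|K|$ coming from the core below by $\tfrac{r}{n-1}$ times the corresponding part of $|\partial K|$. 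Restoring the $j=0$ (pure ball) terms and discarding $V_n(C)\ge 0$ yields
\[
  |K| \ \ge\ \omega_n r^n + \frac{r}{n-1}\bigl(|\partial K| - n\,\omega_n r^{n-1}\bigr),
\]
and a direct substitution of $A_S = n\omega_n r^{n-1} + (n-1)\omega_{n-1}\ell\, r^{n-2}$ shows the right-hand side equals $|S| = \omega_n r^n + \omega_{n-1}\ell\, r^{n-1}$ once $|\partial K| = |\partial S|$. Equality forces $V_j(C)=0$ for every $j \ge 2$, so the affine hull of $C$ is at most one-dimensional; a compact convex set of dimension $\le 1$ is a segment, whence $K$ is a $\lambda$-sausage.

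I expect the main obstacle to lie in the first step rather than in the inequality: one must justify rigorously, in the stated weak curvature sense, that the rolling theorem applies and that $C = K \ominus B_r$ is a genuine nonempty convex body with $K = C \oplus B_r$, including the case in which $\partial K$ fails to be $C^2$ and the bound $\kappa \le \lambda$ is read weakly. Handling the non-smooth boundary — for instance by approximating $K$ with smooth $\lambda$-concave bodies and passing to the limit, while verifying that both the curvature class and the constraint $|\partial K| = |\partial S|$ are preserved — is where the real care is required. Once the representation $K = C \oplus B_r$ is secured, the Steiner formula together with the elementary ratio estimate closes the argument and pins down the equality case.
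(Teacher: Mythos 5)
The paper does not actually prove this statement: it is quoted as background directly from \cite{kostya3}, so there is no in-paper proof to compare against. That said, your argument is essentially correct, and it is the natural Euclidean, $n$-dimensional version of the very strategy the paper deploys for its hyperbolic main results (Theorems \ref{th1} and \ref{th2}): represent $K$ as the outer parallel body $K = C \oplus B_{1/\lambda}$ of a convex core $C$, expand $|K|$ and $|\partial K|$ via the Steiner polynomial, and observe that the sausage is precisely the case where the core degenerates to a segment. Your coefficientwise estimate $\tfrac{r}{n-j} \ge \tfrac{r}{n-1}$ for $j \ge 1$, combined with discarding $V_n(C) \ge 0$, correctly yields $|K| \ge \omega_n r^n + \tfrac{r}{n-1}\bigl(|\partial K| - n\omega_n r^{n-1}\bigr)$, and the verification that the right-hand side equals $|S|$ under $|\partial K|=|\partial S|$ checks out (using $V_0(\Sigma)=1$, $V_1(\Sigma)=\ell$, $V_j(\Sigma)=0$ for $j\ge 2$ for a segment $\Sigma$ of length $\ell$). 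The equality analysis is also sound: strictness of the ratio for $2 \le j \le n-1$ together with $V_n(C)=0$ forces $V_j(C)=0$ for all $j\ge 2$, hence $\dim C \le 1$; just note that $C$ may be a single point, in which case $K$ is a ball, i.e., a degenerate sausage with $\ell = 0$.

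The one genuine gap is the one you flag yourself: the decomposition $K = C \oplus B_{1/\lambda}$ with $C$ a nonempty convex body does not follow formally from the weak curvature bound and must be supplied by Blaschke's rolling theorem for general (non-$C^2$) convex bodies in $\mathbb{R}^n$; this is exactly the Brooks--Strantzen-type result the paper cites as \cite{B1,B2}. It is worth appreciating that this is the step that fails in general in $\mathbb{H}^2$ and is the entire reason the paper introduces the extra thickness hypothesis $\kappa \ge 1/\lambda$ and proves Lemma \ref{outerparallel}; in the Euclidean setting the step is classical and your reduction then closes the argument. Two minor points to make explicit: the identity $|\partial K| = \tfrac{d}{dt}\,|C\oplus B_t|\big|_{t=r}$ requires $r>0$ (which holds), and the nonnegativity $V_j(C)\ge 0$ you use is a standard property of intrinsic volumes of convex bodies.
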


Less is known for $\lambda$-convex bodies in $\mathbb{R}^n$. The following conjecture was formulated by Borisenko (see \cite{kostya3}): 
\newpage
\textbf{Conjecture} (The Reverse Isoperimetric Inequality for $\lambda$-convex bodies in $\mathbb{R}^n$). Let $n \geq 2$. Let $K \subset \mathbb{R}^n$ be a $\lambda$-convex body, and let $L \subset \mathbb{R}^n$ be a $\lambda$-convex lens, i.e., the intersection of two balls of radius $1/\lambda$. If $|\partial K| = |\partial L|$, then $|K| \geq |L|$, and equality holds if and only if $K$ is a $\lambda$-convex lens.

\begin{figure} [ht!] 
\centering
\includegraphics[width= 13.3cm, height=3.8cm]{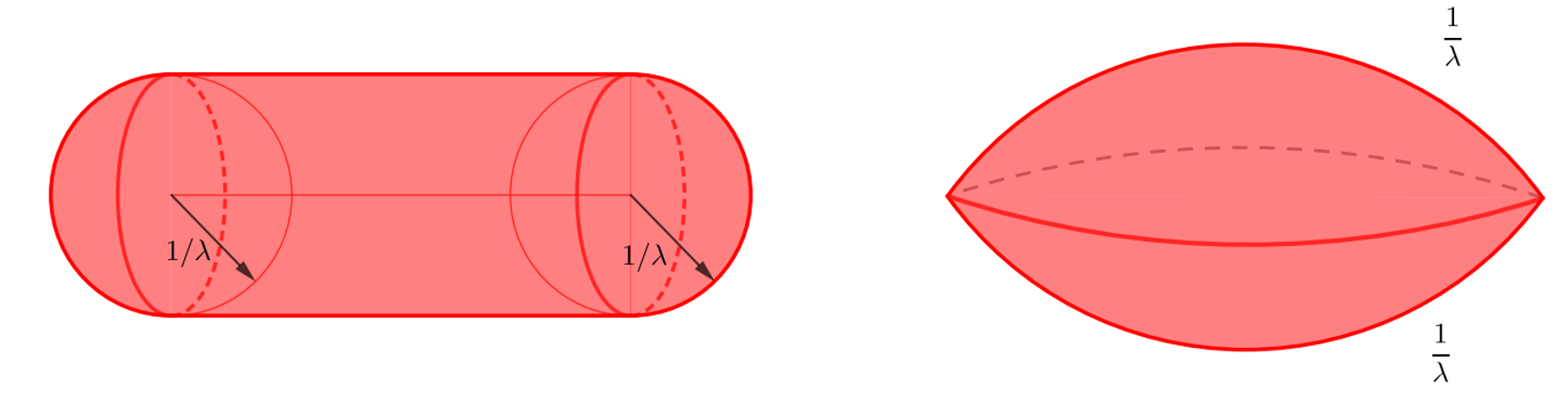}
\caption{ $\lambda$-sausage (left) and $\lambda$-convex lens (right) \cite{kostya3}}
\end{figure}

For $n=2$ the conjecture was proved in \cite{kostya2} (see \cite{fodor} for an alternative proof). For $n=3$ the conjecture was proved in \cite{kostya}. 
\newline
\begin{comment}
In this paper, we also address the Blaschke's Rolling Theorem for thick $\lambda$-concave bodies, which is intimately related to the reverse isoperimetric problem. The statement of the classical result is as follows:

\begin{itemize} 
    \item \textbf{Blaschke’s Rolling Theorem \cite{blaschke}}. Let $\gamma$ be a closed, simple, smooth, convex curve in $\mathbb{R}^2$ such that for all $x \in \gamma, \, \kappa_g(x) \leq \lambda$. Then a ball of curvature $\lambda$ rolls along the boundary freely inside the region bounded by $\gamma$.  
\end{itemize}

This result was extended to $\mathbb{R}^n$ by \cite{B1,B2}, and later on in Riemannian manifolds of bounded curvature in \cite{B3,B4,B5,B6}. Nevertheless, this theorem does not hold in general for $\lambda$-concave domains in $\mathbb{H}^2$, and hence has not been studied in the literature yet. We fix this gap by providing Blaschke's Rolling Theorem for $\lambda$ concave bodies under the thickness condition (Theorem \ref{th3}). We also show that this result is optimal. 
\end{comment}

\subsection{Reverse isoperimetric inequality in $\mathbb{H}^2$}
We will first define the main object of interest in this paper, the \textit{thick $\lambda$-sausage}: 

\begin{definition} [Thick $\lambda$-sausage] \label{thickk}
A \textit{thick $\lambda$-sausage} is the  convex domain bounded by two equal circular arcs of curvature $\lambda$ and two equal arcs of hypercircle of curvature  $1  / \lambda$ (see Figure \ref{sausage}). 
\end{definition}

\begin{figure} [ht!] 
\centering
\includegraphics[scale=0.38,clip]{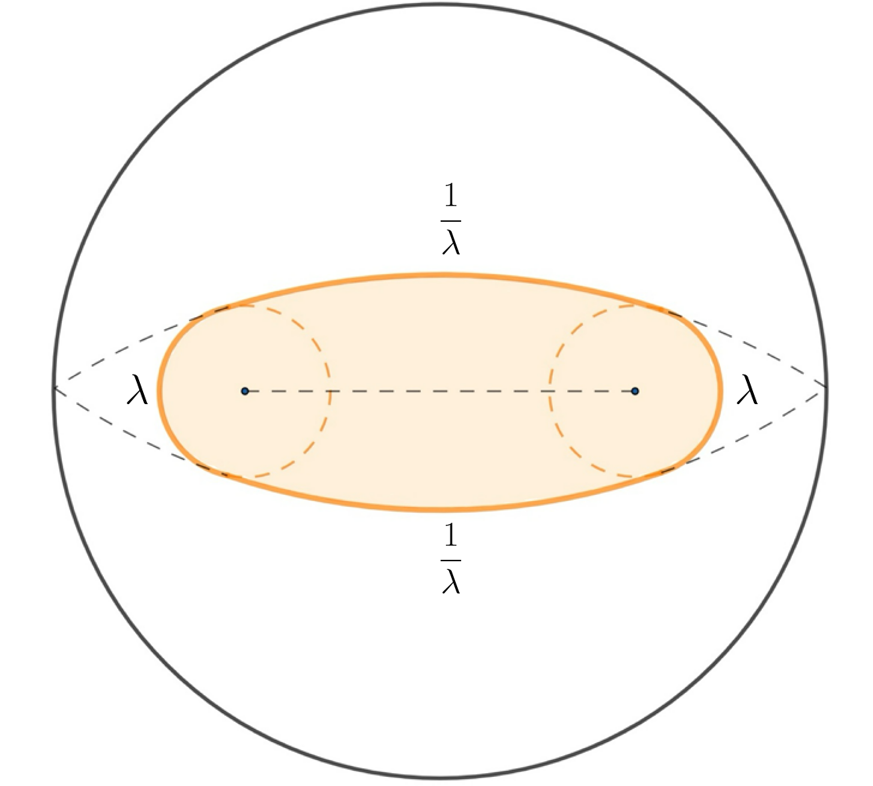}
\caption{Thick $\lambda$-sausage}
\label{sausage}
\end{figure}

In this paper we provide proofs of the theorems stated below. 
\begin{theorem}
[Reverse isoperimetric inequality for \textit{thick $\lambda$-concave bodies in $\mathbb{H}^2$}] \label{th1}
Let $K \subset \mathbb{H}^2$ be a \textit{thick $\lambda$-concave} body and let $S \subset \mathbb{H}^2$ be a \textit{thick $\lambda$-sausage}, as in Definition \ref{thickk}. If $|\partial K| = |\partial S|$, then  $|K| \geq |S|$. Moreover, equality holds if and only if $K$ is a \textit{thick $\lambda$-sausage}. 
\end{theorem}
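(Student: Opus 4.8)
The plan is to reduce Theorem \ref{th1} to a one-parameter optimization by representing every thick $\lambda$-concave body as the outer parallel (tube) of a convex \emph{core}. Write $r_\lambda:=\operatorname{arccoth}\lambda$ for the radius of a circle of curvature $\lambda$ in $\mathbb{H}^2$, so that $\coth r_\lambda=\lambda$ and, crucially, $\tanh r_\lambda=1/\lambda$. Along the inner parallel flow $K_t=\{x\in K:\ d(x,\partial K)\ge t\}$, a boundary point of curvature $\kappa$ is sent to one of curvature
\[
\kappa_t=\frac{\kappa\cosh t-\sinh t}{\cosh t-\kappa\sinh t},
\]
which is $\ge 0$ precisely when $\kappa\ge\tanh t$. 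Since a thick body satisfies $\kappa\ge 1/\lambda=\tanh r_\lambda\ge\tanh t$ for all $t\le r_\lambda$, the inner parallel bodies stay convex on $[0,r_\lambda]$ — this is exactly where the thickness hypothesis enters, compensating for the failure of convexity of inner parallels in $\mathbb{H}^2$. First I would invoke the (thick) Blaschke rolling theorem, i.e.\ that a disk of radius $r_\lambda$ rolls freely inside $K$, to conclude that $C:=K_{r_\lambda}$ is a nonempty convex body (possibly degenerate to a segment or a point) and that $K$ is recovered as its outer parallel body $K=C_{+r_\lambda}:=\{x:\ d(x,C)\le r_\lambda\}$.

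Next I would pass to the Steiner-type (tube) formulas in $\mathbb{H}^2$. Writing $L_C:=|\partial C|$ for the perimeter of the core and $T_C:=2\pi+|C|$ for its total boundary curvature (by Gauss--Bonnet $\int_{\partial C}\kappa_g\,ds+\sum_i\theta_i=2\pi+|C|$, corner angles included), expanding $C$ by $r_\lambda$ gives
\[
|\partial K|=\cosh r_\lambda\,L_C+\sinh r_\lambda\,T_C,\qquad 2\pi+|K|=\cosh r_\lambda\,T_C+\sinh r_\lambda\,L_C .
\]
These follow from $\tfrac{d}{d\rho}|\partial C_{+\rho}|=2\pi+|C_{+\rho}|$ and $\tfrac{d}{d\rho}|C_{+\rho}|=|\partial C_{+\rho}|$, a system already implicit in the curvature evolution above; the degenerate cases (segment, point) are checked directly and reproduce the sausage and the $\lambda$-disk respectively.

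The optimization is then immediate. Eliminating $L_C$ from the two identities with $|\partial K|=|\partial S|$ held fixed yields
\[
2\pi+|K|=\frac{T_C}{\cosh r_\lambda}+\tanh r_\lambda\,|\partial K|,
\]
so that, for fixed perimeter, $|K|$ is a strictly increasing function of the single quantity $T_C=2\pi+|C|$. Since $|C|\ge 0$ with equality if and only if the core $C$ is degenerate (a segment or a point), $|K|$ is minimized exactly when $T_C=2\pi$, i.e.\ when $C$ is a geodesic segment, which is precisely the case $K=S$ a thick $\lambda$-sausage; matching perimeters then fixes the length of the segment and hence $S$ uniquely. This gives both the inequality $|K|\ge|S|$ and the equality characterization.

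The main obstacle is the first step: establishing that $K$ really is the outer parallel of a convex core. This is where all the hyperbolic difficulties concentrate — one must prove the thick rolling theorem and the convexity and nondegeneracy of $C=K_{r_\lambda}$, and verify that the curvature bounds make $\partial K$ of class $C^{1,1}$ so that the tube formulas apply (the upper bound $\kappa\le\lambda$ keeps the inner normal exponential map nonsingular for $t<r_\lambda$, since the focal distance $\operatorname{arccoth}\kappa\ge r_\lambda$, preventing the boundary from collapsing before time $r_\lambda$, while the lower bound $\kappa\ge 1/\lambda$ keeps $K_t$ convex). Once the correspondence $K\leftrightarrow C$ is in place, the remaining steps are the elementary Steiner computation and the linear optimization above; care is only needed to treat degenerate cores and to confirm that equality forces $C$ to be a genuine segment rather than merely $|C|=0$ in a limiting sense.
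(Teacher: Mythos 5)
Your proposal follows essentially the same route as the paper: both represent the thick $\lambda$-concave body as the outer parallel body at distance $\operatorname{arccoth}\lambda$ of a convex core (the paper via a cited strict-convexity/rolling result, you via the curvature evolution $\kappa_t$ and the rolling theorem), then apply the hyperbolic Steiner formula and observe that for fixed perimeter $|K|$ is an increasing affine function of $|C|=|K_{-r_\lambda}|$, with the minimum attained exactly when the core degenerates to a geodesic segment, i.e.\ when $K$ is a thick $\lambda$-sausage. Your identity $2\pi+|K|=T_C/\cosh r_\lambda+\tanh r_\lambda\,|\partial K|$ is precisely the paper's rearranged inner Steiner formula, so the argument is correct and matches the paper's proof in substance.
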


\begin{theorem} [Lower bound for reverse isoperimetric inequality for \textit{thick $\lambda$-concave bodies in $\mathbb{H}^2$}] \label{th2}
Let $K \subset \mathbb{H}^2$ be a \textit{thick $\lambda$-concave} body. Then, 
\[ |K| \geq \frac{|\partial K|}{\lambda} + 2\pi \left( 1-\sqrt{
1-\frac{1}{\lambda^2}} \right).\]
Moreover, equality holds if and only if $K$ is a \textit{thick $\lambda$-sausage}.
\end{theorem}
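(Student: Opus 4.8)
The plan is to obtain Theorem \ref{th2} as a direct consequence of Theorem \ref{th1}: once one knows that, \emph{within the family of thick $\lambda$-sausages}, the area is an affine function of the perimeter with slope $1/\lambda$, the universal lower bound follows by comparing an arbitrary thick $\lambda$-concave body with the sausage of equal perimeter. So the first task is an explicit computation for the model body $S$.

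I would set up Fermi coordinates $(u,\rho)$ along the geodesic axis of symmetry of $S$, in which the metric reads $\dif s^2 = \cosh^2\!\rho\,\dif u^2 + \dif\rho^2$. The two hypercircle sides are the equidistant curves $\rho = \pm h$, whose geodesic curvature is $\tanh h$, so the prescribed side curvature $1/\lambda$ fixes $h$ by $\tanh h = 1/\lambda$ (equivalently $\coth h = \lambda$). The caps are arcs of circles of curvature $\lambda$, hence of radius $h$ as well (since $\coth h = \lambda$), and tangency to the two sides forces each cap to be a half-circle closing off an end. Writing $a$ for the half-length of the straight part and assembling the two half-caps into one disk of radius $h$, the standard length and area formulas for hyperbolic circles and hypercircles give
\begin{equation*}
|\partial S| = 4a\cosh h + 2\pi\sinh h, \qquad |S| = 4a\sinh h + 2\pi(\cosh h - 1).
\end{equation*}
Differentiating in $a$ yields $\tfrac{\dif}{\dif a}|S| = 4\sinh h = \tanh h \cdot 4\cosh h = \tfrac1\lambda\,\tfrac{\dif}{\dif a}|\partial S|$, so $|S| - \tfrac1\lambda|\partial S|$ is constant along the family. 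Evaluating this constant at the degenerate case $a = 0$ (the disk of radius $h$) and simplifying via $1/\cosh h = \sqrt{1 - 1/\lambda^2}$ produces precisely the additive term appearing in Theorem \ref{th2}; hence $|S|$ equals the right-hand side of that inequality for every member of the family.

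With this identity in hand the conclusion is short. Given a thick $\lambda$-concave body $K$, I would pick the thick $\lambda$-sausage $S$ with $|\partial S| = |\partial K|$; Theorem \ref{th1} then gives $|K| \ge |S| = \tfrac{|\partial K|}{\lambda} + 2\pi\bigl(1 - \sqrt{1 - 1/\lambda^2}\bigr)$, with equality exactly when $K$ is a thick $\lambda$-sausage. The step that requires care — and which I expect to be the main obstacle — is showing that such an $S$ always exists, i.e. that $|\partial K|$ lies in the range $[\,2\pi\sinh h, \infty)$ swept out continuously by the sausage perimeters as $a$ runs over $[0,\infty)$. The upper end is immediate, so the real issue is the lower bound $|\partial K| \ge 2\pi\sinh h$. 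This is where the hypotheses enter: $\lambda$-concavity together with the thickness assumption lets a disk of curvature $\lambda$ — that is, of radius $h$ — roll freely inside $K$, so $K$ contains such a disk, and monotonicity of boundary length under inclusion of convex bodies in $\mathbb{H}^2$ (the nearest-point projection onto a convex subset is $1$-Lipschitz) then yields $|\partial K| \ge 2\pi\sinh h$, as needed.
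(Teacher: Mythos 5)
Your strategy is essentially the paper's, differing mainly in how the key identity for the sausage is obtained. The paper evaluates its inner-parallel-body Steiner formula (Corollary \ref{corollarymain}, rearranged as (\ref{steiner2})) at $\rho_\lambda=\coth^{-1}(\lambda)$, where $S_{-\rho_\lambda}$ degenerates to a geodesic segment of zero area; since $|K_{-\rho_\lambda}|\ge 0$ for every thick $\lambda$-concave $K$, this gives the affine relation for $S$, the inequality, and the equality case in one stroke, without invoking Theorem \ref{th1}. You instead compute $|\partial S|$ and $|S|$ by hand in Fermi coordinates --- your formulas $|\partial S|=4a\cosh h+2\pi\sinh h$ and $|S|=4a\sinh h+2\pi(\cosh h-1)$ are correct and agree with what the Steiner formula yields --- and then deduce the bound from Theorem \ref{th1} by matching perimeters. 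That route forces the extra existence step you flag, namely $|\partial K|\ge 2\pi\sinh h$; your argument for it (a ball of radius $h$ sits inside $K$ by the rolling property, and perimeter is monotone under inclusion of convex bodies in $\mathbb{H}^2$ via the $1$-Lipschitz nearest-point projection) is sound, and it is a genuine point that the paper's route sidesteps by working with $|K_{-\rho_\lambda}|\ge 0$ directly.

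One arithmetic claim does not check out. From your own formulas, $|S|-\tfrac1\lambda|\partial S| = 2\pi(\operatorname{sech} h-1) = -2\pi\bigl(1-\sqrt{1-1/\lambda^2}\bigr)$, which is negative: it is the \emph{negative} of the additive term in the statement, so the assertion that evaluating at $a=0$ ``produces precisely the additive term appearing in Theorem \ref{th2}'' is a sign slip. (The same tension is present in the paper itself: substituting $\rho_\lambda$ into (\ref{steiner2}) with $|S_{-\rho_\lambda}|=0$ gives the constant $2\pi(\operatorname{sech}\rho_\lambda-1)$, whereas (\ref{equality}) records the opposite sign; note also that the Euclidean sausage satisfies $|S|=|\partial S|/\lambda-\pi/\lambda^2$, with a negative constant, so the $c\to 0$ limit likewise points to the minus sign.) With the sign corrected, your argument does prove the intended sharp bound, and the equality discussion goes through as you describe.
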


\begin{Remark} In fact, if $K \subset \mathbb{H}^2(-c^2)$ is a \textit{thick $\lambda$-concave} body in the hyperpolic plane of curvature $-c^2$, $c>0$, then

\[ |K| \geq \frac{|\partial K|}{\lambda} + \frac{2\pi}{c^2} \left( 1-\sqrt{
1-\frac{c^2}{\lambda^2}} \right).\]

And, as $c \rightarrow 0$, the inequality approaches the classical Euclidean result in $\mathbb{R}^2$ proved in \cite{kostya3}. This inequality also complements the analogous results obtained in $\mathbb{S}^2$ \cite{kostya4}.
\end{Remark}

\begin{comment}
\begin{theoremalpha} \label{th3} \textbf{(Blaschke's rolling theorem for thick $\lambda$-concave bodies)} Let $K \subset \mathbb{H}^2 $ be a thick $\lambda$-concave body. Then the ball of curvature $\lambda$ can roll freely in $K$. Moreover, for every 
$\lambda' > \lambda$, there exists a simple closed convex curve with $1/\lambda' \le \kappa_g \le \lambda$ such that a ball of curvature $\lambda$ can not roll freely in the domain bounded by the curve. 
\end{theoremalpha}
\end{comment}

\subsection{Structure of the paper}
In Section \ref{2}, we provide some background theory on the curvature of convex bodies in the hyperbolic plane. In Section \ref{section 3}, we give the proof of Theorem \ref{th1} and Theorem~\ref{th2}.
The proof follows a similar approach as in the Euclidean case, where the structure of the inner parallel bodies is analyzed, but with the additional difficulty that in the hyperbolic plane the inner parallel bodies of a convex body are not necessarily convex.

\subsection{Acknowledgements}
I would like to express my gratitude to Kostiantyn Drach for supervising this project as part of the Barcelona Introduction to Mathematical Research 2024 (BIMR) summer program, organized by the Centre de Recerca \text{Matem\`atica} (CRM).

\section{Background Theory} \label{2}
\subsection{Curves of constant curvature in $\mathbb{H}^2$} \label{2.1}
In the hyperbolic space of curvature -$1$ there are multiple types of curves of constant geodesic curvature (see Figure \ref{summary}). This section provides the definition and key properties of such curves in the Poincar\'{e} Disk model and the Upper Half-Plane model, which are the relevant models for our analysis (see e.g  Section 35.2.4 in \cite{BZ} or Section 2.1 \cite{kostya}).

\begin{itemize}
    \item Circles: Circles of radius $R$ have geodesic curvature $ \lambda= \coth (R) >1$. Both in the Poincar\'{e} Disk model and in the Upper Half-Plane model, hyperbolic circles are Euclidean circles, even though, in general, the hyperbolic center does not coincide with the Euclidean center of the disk.
    \item Geodesics: By definition, the geodesic curvature of geodesics is 0. In the Poincar\'{e} Disk model, geodesics appear as arcs of circles that are orthogonal to the boundary of the disk or as straight lines passing through the center. In the Upper Half-Plane model, geodesics are represented as either semicircles orthogonal to the boundary (the real line) or vertical lines perpendicular to the boundary.
    \item Horocyles: A horocycle is a curve whose perpendicular geodesics converge asymptotically to the same direction. Horocyles have geodesic curvature $\lambda=1$. In the Poincar\'{e} Disk model, horocycles appear as Euclidean circles that are tangent to the boundary of the disk. The center of the horocyle is an ideal point (lies on the boundary of the hyperbolic space) and has ``infinite radius''. In the Upper Half-Plane model, horocycles appear as Euclidean circles tangent to the real line or as horizontal lines. 
 
    \item Hypercircles: Hypercircles are curves equidistant to a geodesic. They have a geodesic curvature $\lambda \in (0,1)$. In the Poincar\'{e} Disk model and Upper Half-Plane model, a hypercircle appears as an arc of a Euclidean circle that intersects the boundary of the model at two distinct points with an angle $0 < \alpha < \frac{\pi}{2}$. 
\end{itemize}

\begin{figure} [ht!] 
\centering
\includegraphics[width= 13.7cm, height=5.2cm]{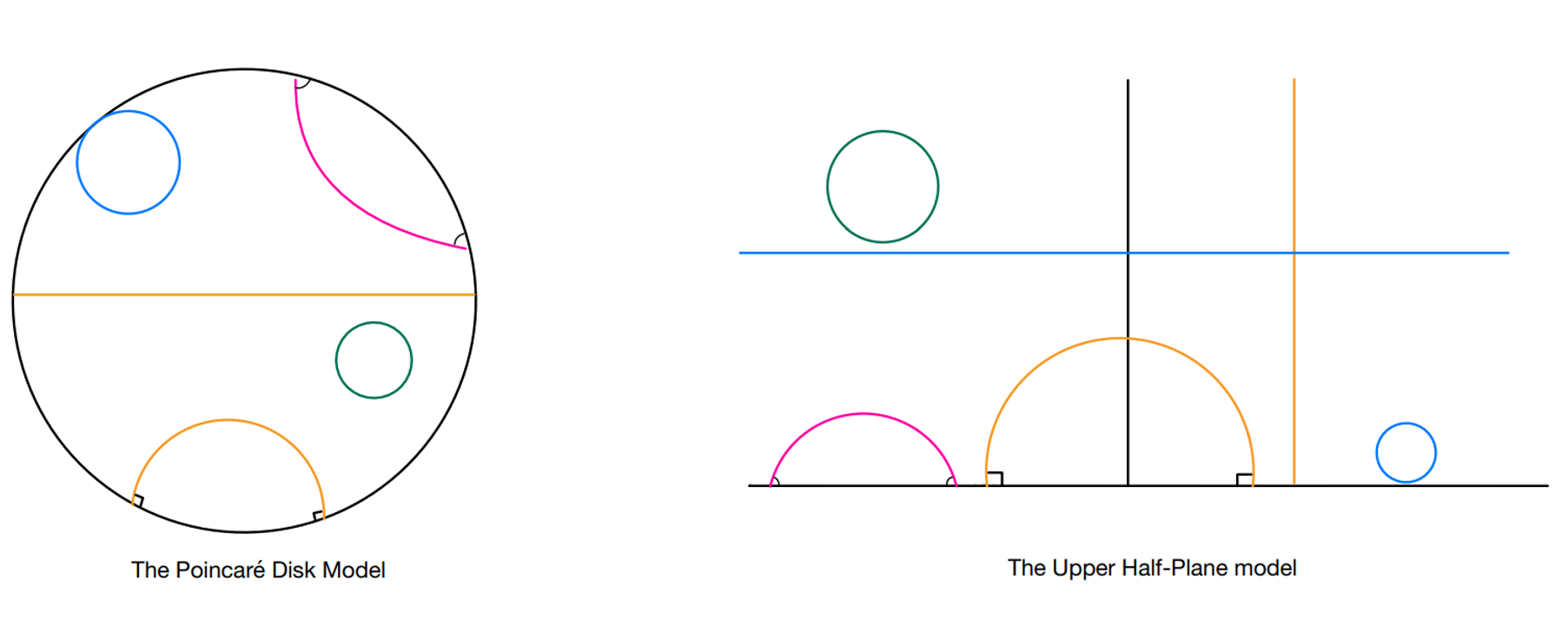}
\caption{ Circle (green), Geodesics (orange), Horocycles (blue) and Hypercircles (pink) }
\label{summary} 
\end{figure}

\newpage
\subsection{Some useful lemmas}
\begin{lemma} \label{hcurvature}
In the Poincar\'{e} Disk model, the geodesic curvature of a curve at the origin is $\frac{1}{2}$ of its Euclidean curvature at that point.
\end{lemma}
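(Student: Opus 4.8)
The plan is to exploit the fact that the Poincar\'{e} Disk model is \emph{conformal} to the Euclidean plane, and then invoke the standard transformation law for geodesic curvature under a conformal change of metric. Concretely, I would write the hyperbolic metric as $ds^2 = e^{2\phi}(dx^2 + dy^2)$, where the conformal factor is
\[
e^{2\phi} = \frac{4}{(1-x^2-y^2)^2}, \qquad \phi = \ln 2 - \ln\!\big(1-x^2-y^2\big).
\]
For any conformal metric of this form, a curve with Euclidean geodesic curvature $\kappa_{\mathrm{eucl}}$ has hyperbolic geodesic curvature given by the well-known formula
\[
\kappa_g = e^{-\phi}\left(\kappa_{\mathrm{eucl}} + \partial_\nu \phi\right),
\]
where $\nu$ is the Euclidean unit normal to the curve and $\partial_\nu \phi$ denotes the directional derivative of $\phi$ along $\nu$. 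I would either cite this identity or derive it quickly from the conformal change of the Levi-Civita connection (the Christoffel symbols pick up first-derivative terms in $\phi$).

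The key observation is that the origin is a critical point of the conformal factor. A direct computation gives
\[
\nabla \phi = \frac{(2x,\,2y)}{1-x^2-y^2},
\]
which vanishes at $(x,y)=(0,0)$; hence the term $\partial_\nu \phi$ drops out there \emph{regardless of the direction} $\nu$, i.e.\ independently of the curve under consideration. At the same point $e^{2\phi} = 4$, so $e^{-\phi} = \tfrac{1}{2}$. Substituting into the transformation law yields $\kappa_g = \tfrac{1}{2}\,\kappa_{\mathrm{eucl}}$ at the origin, which is exactly the claim.

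The main obstacle — and really the only substantive point — is pinning down the correct transformation formula with its sign convention and making rigorous that the gradient of $\phi$ vanishes at the origin so that only the pure scaling factor $e^{-\phi}=\tfrac12$ survives; away from the origin the extra $\partial_\nu\phi$ term is nonzero, which is precisely why the clean factor of $\tfrac12$ is special to the center. As an alternative route avoiding any appeal to a cited formula, I could instead parametrize an arbitrary smooth curve by Euclidean arc length through the origin and compute $\kappa_g$ directly via the covariant derivative of the unit tangent, using the Christoffel symbols of the hyperbolic metric; at the origin those symbols reduce to expressions in $\nabla\phi$, which again vanish, leaving only the factor $e^{-\phi}=\tfrac12$ multiplying the Euclidean curvature. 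Either way, the result follows from the conformality of the model together with the flatness of the metric to first order at its center.
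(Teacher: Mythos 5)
Your proof is correct, and it is in fact more rigorous than the one in the paper. The paper only offers a heuristic: it notes that near the origin the hyperbolic metric is approximately Euclidean up to a scaling factor of $2$, and that parallel transport behaves approximately as in Euclidean space there, concluding that the geodesic curvature is half the Euclidean one without any explicit computation. You instead invoke the exact conformal transformation law $\kappa_g = e^{-\phi}\bigl(\kappa_{\mathrm{eucl}} + \partial_\nu\phi\bigr)$ for the metric $ds^2 = e^{2\phi}(dx^2+dy^2)$ with $e^{2\phi} = 4/(1-x^2-y^2)^2$, and observe the two facts that make the origin special: $\nabla\phi$ vanishes there (so the normal-derivative term drops out for every curve), and $e^{-\phi} = \tfrac12$ there. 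Both arguments rest on the same underlying geometry --- the model is conformal with factor $2$ at the center, and the center is a critical point of the conformal factor --- but yours turns the paper's ``approximately Euclidean to first order'' intuition into an exact identity, and as a bonus makes clear why the clean factor of $\tfrac12$ fails away from the origin. The only point to be careful about is the sign/orientation convention in the transformation law, which you flag; since the correction term vanishes at the origin anyway, this does not affect the conclusion.
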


\begin{proof}
This result is standard, and we will only give a heuristic idea of the proof. To compute the geodesic curvature of a curve at $(0,0)$ in the Poincar\'{e} disk, we use parallel transport to quantify how much the curve deviates from a geodesic. We take the geodesic to be the horizontal diameter of the disk. Let $t$ denote the hyperbolic arclength parameter along the curve. Near the origin, the hyperbolic metric is approximately Euclidean up to a scaling factor of 2, so the Euclidean coordinates of the curve are approximately $\left( \frac{t}{2}, 0 \right)$ for $t \to 0$. Since parallel transport along the geodesic near the origin behaves approximately as in Euclidean space, the geodesic curvature at $(0,0)$ is half the Euclidean curvature at that point.
\end{proof}

\begin{lemma} \label{lastlemma}
Let $\beta$ be the angle at which the hypercircle $\Gamma$ intersects the boundary of the unit disk in the Poincar\'{e} Disk model, as illustrated in Figure~\ref{figura}. Then the curvature $\lambda$ of $\Gamma$ in $\mathbb{H}^2$ is given by $
\lambda = \cos(\beta).$
\end{lemma}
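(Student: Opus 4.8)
The plan is to exploit the invariance of both quantities under the isometry group of $\mathbb{H}^2$ and then reduce to a single Euclidean computation at the origin, where Lemma \ref{hcurvature} applies. The geodesic curvature $\lambda$ of a hypercircle is constant and invariant under hyperbolic isometries (which in the Poincar\'{e} Disk model are the M\"obius transformations preserving the unit disk), while the intersection angle $\beta$ with the boundary is invariant because these maps are conformal and send the boundary circle to itself. Hence, picking any point $x_0 \in \Gamma$ and an isometry $\phi$ with $\phi(x_0) = O$, it suffices to prove the identity for the hypercircle $\phi(\Gamma)$, which passes through the origin and has the same $\lambda$ and the same $\beta$ as $\Gamma$.

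After a rotation about $O$ I may assume $\phi(\Gamma)$ is symmetric about the vertical diameter, so that it is a Euclidean circle through the origin with center $C = (0,-\rho)$ and Euclidean radius $\rho$, namely $x^2 + (y+\rho)^2 = \rho^2$. By Lemma \ref{hcurvature}, the geodesic curvature at the origin --- which equals $\lambda$, since the geodesic curvature is constant along a hypercircle --- is half the Euclidean curvature there, so
\[ \lambda = \tfrac{1}{2}\cdot\tfrac{1}{\rho} = \frac{1}{2\rho}. \]

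Next I would compute $\beta$ as the Euclidean angle between $\Gamma$ and the unit circle at an intersection point $P$. Intersecting the two circles gives $P=(x_0,y_0)$ with $y_0 = -\tfrac{1}{2\rho}$, and in the triangle $OCP$ one has $|OP| = 1$ and $|OC| = |CP| = \rho$, so the triangle is isosceles. Since the tangent lines to the two circles at $P$ are orthogonal to the radii $PO$ and $PC$ respectively, the intersection angle $\beta$ equals $\angle OPC$; the law of cosines then yields $\rho^2 = 1 + \rho^2 - 2\rho\cos\beta$, that is $\cos\beta = \tfrac{1}{2\rho}$. Comparing this with the previous display gives $\lambda = \cos\beta$, as claimed.

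The main obstacle --- really the only delicate point --- is the bookkeeping of the angle convention: one must verify that the geometrically measured angle $\beta$ in Figure~\ref{figura} is $\angle OPC$ itself, and not its complement or supplement. I would pin this down using the two limiting cases, which also serve as sanity checks: as $\rho \to \infty$ the circle degenerates to the diameter (a geodesic), giving $\lambda = 0$ and $\beta = \tfrac{\pi}{2}$ (orthogonal intersection), while at $\rho = \tfrac{1}{2}$ the circle is internally tangent to the boundary (a horocycle), giving $\lambda = 1$ and $\beta = 0$. Both endpoints agree with $\lambda = \cos\beta$ and with the curvature ranges recorded in Section \ref{2.1}, confirming that $\beta$ is measured as the honest intersection angle.
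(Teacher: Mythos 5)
Your proof is correct and follows essentially the same route as the paper: normalize by an isometry so that $\Gamma$ passes through the origin, apply Lemma \ref{hcurvature} to get $\lambda = \tfrac{1}{2\rho}$, and compute the intersection angle from the isosceles triangle formed by the two circle centers and the intersection point (the paper chases tangent-line angles where you use the law of cosines, but the underlying geometry is identical). Your limiting-case check of the angle convention is a nice addition not present in the paper.
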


\begin{proof}
Let $\Gamma$ be a hypercircle containing the center of the disk (note that we can map any hypercircle to the center of this disk with a M\"obius transformation). We can then consider the construction shown in Figure  \ref{figura}, where the black circle denotes the boundary of the Poincar\'{e} Disk and the green circle the hypercircle $\Gamma$. 

Let $\delta$ denote the angle between the tangent to the unit circle and the tangent to the hypercircle at their point of intersection (red lines in Figure \ref{figura}). Since the radius of the circle is perpendicular to its tangent at the boundary point, we have that $\delta=\pi-\alpha$, which leads to $\delta+\beta=\pi$. This implies that $\beta=\alpha$. Using some trigonometric identities, we get that
\[ \sin \left( \frac{\pi}{2}- \alpha \right) = \frac{1}{2 R}.\]
Hence, 
\[2 \cos(\alpha)= \frac{1}{R}. \]

The hypercircle $\Gamma$ at (0,0) has Euclidean curvature $1/R=2\cos(\alpha)$. Therefore, from Lemma \ref{hcurvature}, the hyperbolic curvature of $\Gamma$ at (0,0) is $\cos(\alpha)=\cos(\beta)$. Since (0,0) is not a special point, every point of $\gamma$ has hyperbolic curvature $\cos(\beta)$. 
\end{proof}

\newpage
\begin{figure} [ht!] 
\centering
\includegraphics[width= 6.0 cm, height=6.8cm]{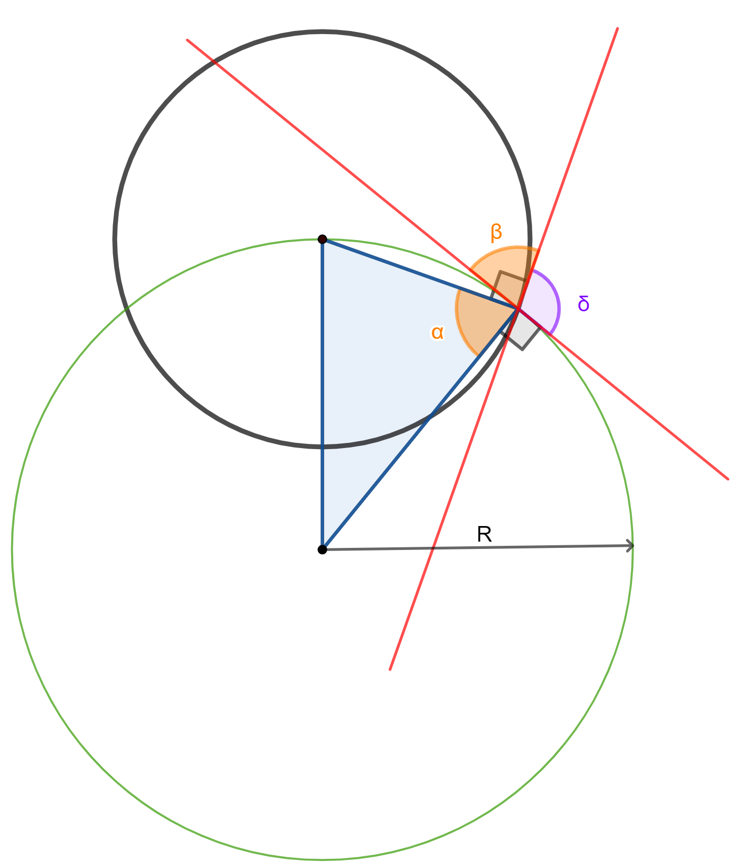}
\caption{ The proof of Lemma \ref{lastlemma} is constructed as follows. We consider the triangle formed by the center of the unit disk (black), the center of the hypercircle (green), and their point of intersection. By examining the properties of the tangent lines to the disk and hypercircle at the point of intersection, we can establish a relationship between the radius $R$, the curvature at (0,0) and $\beta$.}
\label{figura}
\end{figure}

\begin{lemma} \label{lemma2.5}
If all points of a hypercircle $\Gamma$ lie at a distance $R>0$ from a certain geodesic, then $\cos(\beta)=\tanh(R)$, where $\beta$ is the angle between $\Gamma$ and the boundary of the model. 
\end{lemma}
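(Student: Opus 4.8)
The plan is to compute in whichever model makes the equidistant curve most transparent, namely the Upper Half-Plane model, and to exploit the fact that the angle $\beta$ between $\Gamma$ and the boundary is conformally invariant, hence the same in the Poincar\'{e} Disk and the Upper Half-Plane models. First I would normalize the configuration: applying a M\"obius transformation (an isometry of $\mathbb{H}^2$), I may assume that the geodesic to which $\Gamma$ is equidistant is the vertical half-line $x=0$, i.e. the positive $y$-axis.

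Next I would identify the equidistant curves explicitly. The geodesics orthogonal to the $y$-axis are exactly the Euclidean semicircles centered at the origin, $x^2+y^2=\rho^2$, since such a semicircle meets the $y$-axis (at its top point) at a right angle. Measuring hyperbolic arclength along such a semicircle, parametrized by the Euclidean angle $\theta$ from the positive $x$-axis, gives $ds = d\theta/\sin\theta$, so the signed distance from the foot $(0,\rho)$ (at $\theta=\pi/2$) to the point at angle $\theta$ is $\ln\tan(\theta/2)$. Hence the locus of points at a fixed distance from the $y$-axis is a locus of fixed $\theta$, that is, a Euclidean ray from the origin. This shows that (after normalization) $\Gamma$ is such a ray, and the Euclidean angle it makes with the real axis --- the boundary of the model --- is precisely $\beta$.

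Then the computation is short: if $\Gamma$ is the ray at angle $\beta\in(0,\pi/2)$, each of its points lies at hyperbolic distance $R=-\ln\tan(\beta/2)$ from the $y$-axis, so $\tan(\beta/2)=e^{-R}$. Applying the half-angle identity $\cos\beta=\frac{1-\tan^2(\beta/2)}{1+\tan^2(\beta/2)}$ yields
\[ \cos\beta = \frac{1-e^{-2R}}{1+e^{-2R}} = \tanh R, \]
which is the claim. Consistency with Lemma~\ref{lastlemma} is then immediate: that lemma identifies $\cos\beta$ with the curvature $\lambda$ of $\Gamma$, so the present lemma also records the familiar fact that an equidistant curve at distance $R$ has curvature $\tanh R$.

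I expect the main obstacle to be bookkeeping rather than any genuine difficulty: one must verify that the equidistant curves really are the Euclidean rays (equivalently, that the origin-centered semicircles are the common perpendiculars) and, more delicately, keep the sign and the choice of angle straight --- since $\Gamma$ lies on one side of the geodesic, $\beta$ must be taken as the acute angle with the boundary so that $R>0$ gives $\cos\beta=+\tanh R$ rather than $-\tanh R$. The only model-independent input is the conformal invariance of the boundary angle $\beta$, which transports the result back to the Poincar\'{e} Disk statement.
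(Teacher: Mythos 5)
Your proposal is correct and follows essentially the same route as the paper: both pass to the Upper Half-Plane model, take the geodesic to be the positive $y$-axis with the hypercircle a Euclidean ray at angle $\beta$, compute $R=-\log\tan(\beta/2)$ by integrating $1/\sin t$ along an origin-centered semicircle, and conclude via the half-angle identity that $\tanh R=\cos\beta$. The only difference is that you make explicit two points the paper leaves implicit (that the equidistant curves are exactly the rays, and the conformal invariance of $\beta$ under the change of model), which is a welcome addition rather than a divergence.
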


\begin{proof}
In the Upper-Half Plane model of hyperbolic space, hypercircles $\Gamma$ with respect to the geodesic line $x=0$ can be represented as straight lines intersecting the boundary with an angle $\beta < \frac{\pi}{2}$. In this model, the distance $R$ between $\Gamma$ and the geodesic is defined by the arc that intersects both lines and meets the boundary perpendicularly, as illustrated in Figure \ref{Figure5.1}.
\begin{figure} [ht!] 
\centering
\includegraphics[width= 9cm, height=5cm]{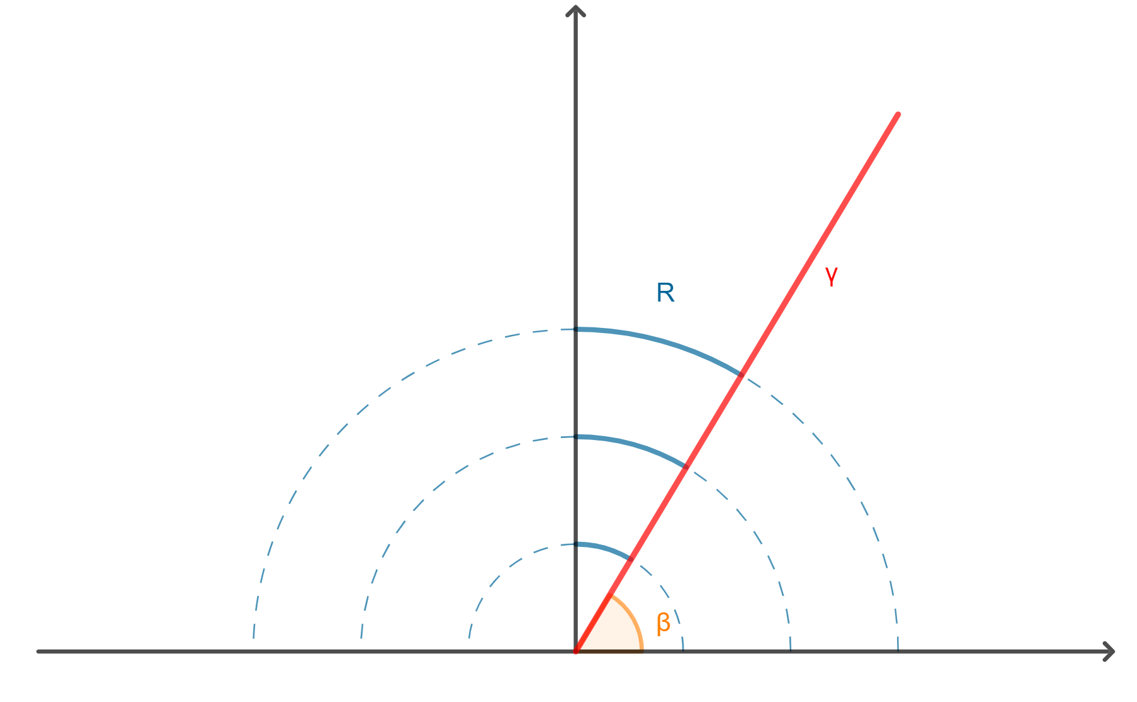}
\caption{ Hypercircle in the Upper Half-Plane }
\label{Figure5.1}
\end{figure}

\newpage
We can then parameterize this family of arcs as: 

\[ \sigma (t)= (A \cos(t), A\sin(t))\]

for $A>0$. Then, we can calculate $R$ by computing the hyperbolic length of $\sigma (t)$ from $\beta \leq t \leq \frac{\pi}{2}$,

\[ R= \int_{\beta}^{\frac{\pi}{2}}\frac{|\sigma^{'}(t)|}{\text{Im}(\sigma (t))} \text{dt} = \int_{\beta}^{\frac{\pi}{2}} \frac{1}{\sin(t)} \text{dt} = - \log\left( \tan\left( \frac{\beta}{2} \right) \right)
.\]

Then, 

\[\tanh(R)=\frac{1 - e^{-2R}}{1 + e^{-2R}} = \frac{1 - \tan^2\left( \frac{\beta}{2} \right)}{1 + \tan^2\left( \frac{\beta}{2} \right)} = \cos(\beta).
\]
%The negative sign comes from the direction of the normal component.  Therefore, we conclude $\tanh(R)= |\cos(\beta)|$. 
Note that when $\Gamma  \subset \mathbb{H}^2(-c^2)$ then the curvature $\lambda$ of $\Gamma$ is normalized to \mbox{$\lambda = c \cdot \tanh(cR)$}

\end{proof}

\begin{lemma} \label{hyper}
Let $\Gamma$ be a hypercircle that lies at a distance $R$ from a geodesic. Suppose this geodesic connects the centers of two disks, each of radius $R$ and curvature $\lambda$ (see Figure \ref{sausage}). Then the geodesic curvature of $\Gamma$ is equal to $1/\lambda$.
\end{lemma}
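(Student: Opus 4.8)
The plan is to read off the conclusion by combining the standard curvature formula for hyperbolic circles with the two preceding lemmas. The key observation is that the single parameter $R$ is doing double duty in the hypothesis: it is simultaneously the radius of each of the two bounding disks and the distance of the hypercircle $\Gamma$ from the geodesic joining their centers (this is exactly the configuration built into Figure \ref{sausage}). Once this coincidence of the two roles of $R$ is made explicit, the proof is a one-line computation, so I do not expect any genuine obstacle here — the content has already been isolated in Lemmas \ref{lastlemma} and \ref{lemma2.5}.

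First I would record the circle side of the picture. Each bounding disk has radius $R$ and curvature $\lambda$, so by the standard formula for hyperbolic circles recalled in Section \ref{2.1} we have $\lambda = \coth R$. Inverting this relation gives $\tanh R = 1/\coth R = 1/\lambda$, which is the quantity I am aiming for.

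Next I would turn to the hypercircle side. Since $\Gamma$ lies at distance $R$ from the geodesic, Lemma \ref{lemma2.5} yields $\cos \beta = \tanh R$, where $\beta$ is the angle at which $\Gamma$ meets the boundary of the model. By Lemma \ref{lastlemma}, the geodesic curvature of $\Gamma$ is precisely $\cos \beta$. Chaining these two facts shows that the curvature of $\Gamma$ equals $\tanh R$.

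Finally I would combine the two computations: the curvature of $\Gamma$ is $\tanh R$, and $\tanh R = 1/\lambda$ by the first step, so the curvature of $\Gamma$ is $1/\lambda$, as claimed. The only point that needs a word of justification — rather than a hard argument — is that the radius of the disks and the distance of $\Gamma$ from the geodesic are the same number $R$; this is guaranteed by the construction, and no orientation or sign issue arises because the hypercircle automatically has curvature in $(0,1)$ while the circle has curvature exceeding $1$, consistent with the two being reciprocal.
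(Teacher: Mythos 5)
Your proposal is correct and follows essentially the same route as the paper: both deduce $\tanh R = 1/\lambda$ from $\lambda = \coth R$ and identify the curvature of $\Gamma$ as $\tanh R$ via Lemma \ref{lemma2.5}. You merely make explicit the intermediate use of Lemma \ref{lastlemma} (curvature $=\cos\beta$), which the paper leaves implicit.
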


\begin{proof}
The curvature of a circle of radius $R$ is
$\coth(R)$. From Lemma \ref{lemma2.5}, the curvature of the hypercircle is $ \tanh (R) = \tanh (\coth^{-1} (\lambda) )= 1/\lambda$, as required.  
\end{proof}
%posar-ho com a lemma (els 2)
%posar la construccio del digrama (gauss bonnet)

Note that Lemma \ref{hyper} defines the curvature of the hypercircles contained in the \textit{thick $\lambda$-sausage} defined in Definition \ref{thickk}.

\section{Proof of Theorems \ref{th1} and \ref{th2}} \label{section 3}

The proof of Theorem \ref{th1} is divided into 2 steps. The strategy is as follows: 
\begin{enumerate}
    \item Given a \textit{thick $\lambda$-concave} body $K$ we construct a convex body $L$ such that $ L \subset K$. We do that by using the notion of strict convexity from \cite{strictconvexity} (Subsection \ref{subsection 3.2}).
    \item We use the classical Steiner's formula to relate the area between $L$ and $K$, and their inner parallel bodies (Subsection \ref{3.3}). 
\end{enumerate}

\subsection{Construction of the inner parallel bodies} \label{subsection 3.2}
The next step of the proof constructs the convex inner parallel bodies of $K$. 

\begin{definition}[Inner parallel bodies $K_{-\rho}$ of $K$] Let $K$ be a convex body in $\mathbb{H}^2$. The \textit{inner parallel body} at distance $\rho \geq 0$ is defined as

\[
K_{-\rho}  := \{x \in \mathbb{H}^2 :B_{\rho}(x) \subset K\},
\]

where $B_{\rho}(x)$ is a ball of radius $\rho$ centered at $x$. The greatest number $\rho$ for which $K_{-\rho}$ is not empty is the \textit{inradius}, denoted as $in(K)$, of $K$. Equivalently, $in(K)$ is the radius of the largest ball contained in $K$. 
\end{definition}

\begin{lemma} \label{3.5} Let $L$ and $K$ be convex subsets of $\mathbb{H}^2$. If $L \subset K$, then $L_{-\rho} \subset K_{-\rho} $ for $0 \leq \rho \leq in(L)$.
\end{lemma}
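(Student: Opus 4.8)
The plan is to argue directly from the definition of the inner parallel body, since the desired inclusion is a pure monotonicity property that requires essentially no geometry beyond the set-theoretic transitivity of inclusion. First I would fix $\rho$ with $0 \leq \rho \leq in(L)$ and take an arbitrary point $x \in L_{-\rho}$. By the definition of the inner parallel body, this means precisely that $B_{\rho}(x) \subset L$. Since by hypothesis $L \subset K$, transitivity of inclusion yields $B_{\rho}(x) \subset K$, which is exactly the statement that $x \in K_{-\rho}$. As $x$ was arbitrary, this establishes $L_{-\rho} \subset K_{-\rho}$.

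The role of the range restriction $0 \leq \rho \leq in(L)$ is only to guarantee that $L_{-\rho}$ is nonempty, so that the statement is non-vacuous; indeed, for $\rho > in(L)$ the set $L_{-\rho}$ is empty and the inclusion holds trivially. I would remark that convexity of $L$ and $K$ is not actually needed for this particular implication — the argument goes through for any pair of nested sets — but it is natural to state the lemma in the convex setting since that is the context in which it will be applied in Subsection~\ref{3.3}, where $L \subset K$ with $L$ the convex body constructed in Subsection~\ref{subsection 3.2}.

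There is no substantial obstacle here: the only point requiring any care is the bookkeeping of the definitions, namely recognizing that membership in $K_{-\rho}$ is logically equivalent to the containment $B_{\rho}(x) \subset K$, so that the inclusion $L \subset K$ transfers immediately to the inner parallel bodies. I would keep the proof to a couple of lines and reserve the genuine hyperbolic-geometric difficulties — in particular the possible non-convexity of inner parallel bodies flagged in the introduction — for the subsequent steps of the argument rather than this monotonicity lemma.
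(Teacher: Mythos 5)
Your proof is correct and is essentially identical to the paper's: both take an arbitrary $x \in L_{-\rho}$, note that $B_\rho(x) \subset L \subset K$ by definition and the hypothesis, and conclude $x \in K_{-\rho}$. Your additional observations (that convexity is not needed and that the range restriction only ensures non-vacuity) are accurate but do not change the argument.
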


\begin{proof}Let $x \in L_{-\rho}$. By the definition of inner parallel body $B_{\rho}(x) \subset L \subset K$, which implies that $x \in K_{-\rho}$. This is satisfied $\forall x \in L_{-\rho}$, so we conclude that $L_{-\rho} \subset K_{-\rho} $, as required. 
\end{proof}

\begin{Remark} Note that for all $N \in \mathbb{N}$, there exists a strictly convex body $\Omega \subset \mathbb{H}^2$ such that it has $N$ inscribed balls  not connected by continuous motion within $\Omega$. We can construct this body $H$ by taking of the convex hull of $N$ equal balls, suitably choosen, and considering the outer parallel body $H_{\rho}$ for a small $\rho > 0$. This ensures that $H_{\rho}$ is strictly convex. 
This phenomenon is not encountered in the Euclidean space, where there exist a unique inscribed ball for a strictly convex body $\Omega \subset \mathbb{R}^2$. 
\end{Remark}

\begin{Remark} Note that $K$ being convex does not guarantee that $K_{-\rho}$ is convex, as shown in Figure \ref{Figure9}. In that example, $K$ is the convex hull of two balls $B^1$ and $B^2$ of radius $R$. The inner parallel body $K_{-\rho}$ for $0< \rho < in(K)$ is the domain bounded by the hypercircles a distance $\rho$ from the geodesics and two balls of radius $r=R-\rho$. It is well known that the domain bounded by hypercircles is convex, therefore, there exist a geodesic $[x,y]$ for $x \in B^1_{-\rho}$ and $y \in B^2_{-\rho}$ such that $[x,y] \nsubseteq K_{-\rho}$ (represented by the orange line in Figure \ref{Figure9} below). This phenomenon is a consequence of the previous Remark. 
\end{Remark}

\begin{figure} [ht!] 
\centering
\includegraphics[scale=0.36,clip]{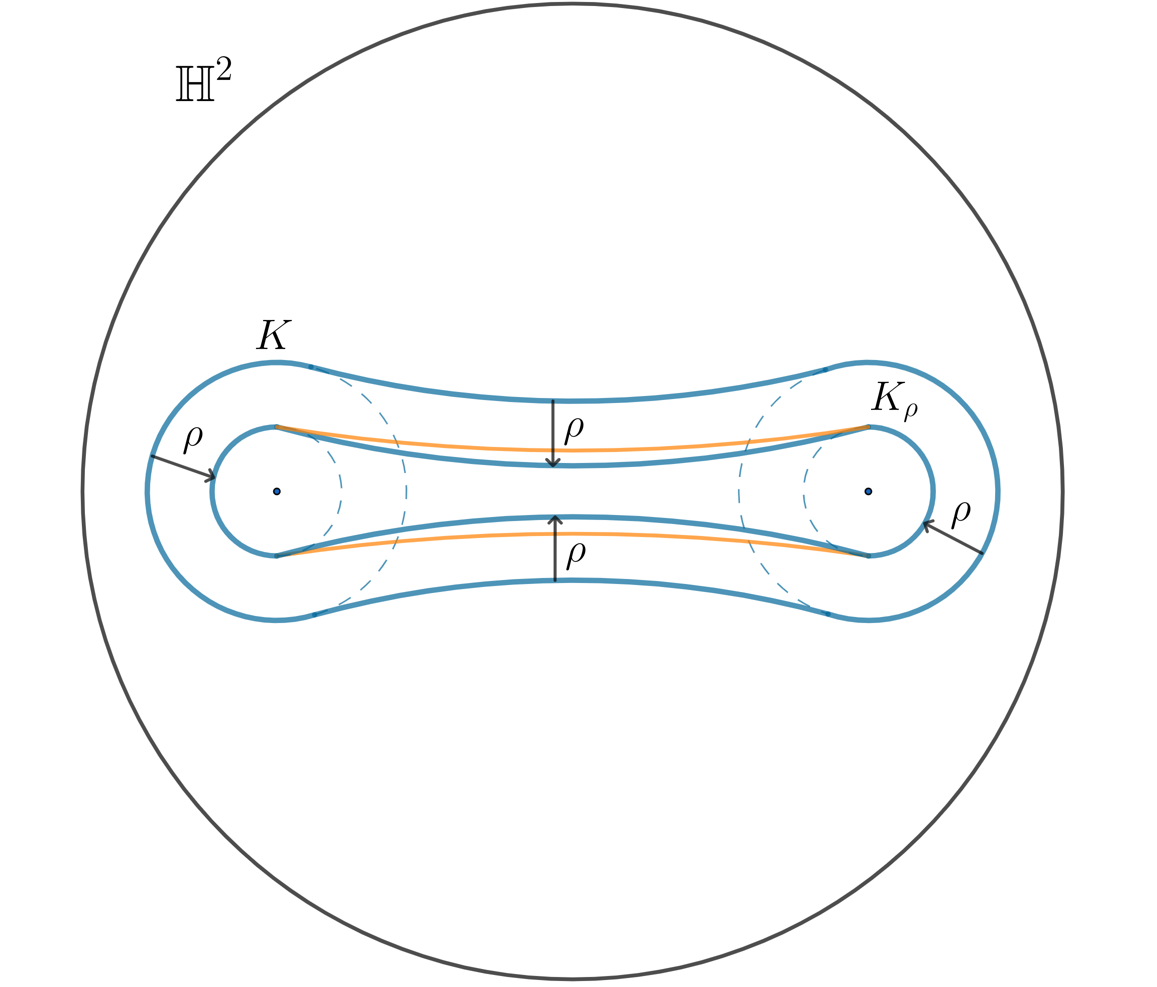}
\caption{Example of an inner parallel body of a convex body which is not convex}
\label{Figure9}
\end{figure}
\newpage
Because of this Remark, it is not immediate that the inner parallel body of $K$ is convex. We will proceed to prove this construction is indeed possible with the following lemma. 

\begin{lemma} \label{outerparallel} 
Let $K$ be a\textit{ thick $\lambda$-concave} body. Then, given 
$0 < \rho < in(K)$, there exists a non-empty convex set 
$L$ such that $K$ is the closed $\rho$-neighborhood of $L$. 
That is, $K$ is the union of all closed balls 
$B_\rho$ of radius $\rho$ whose centers lie in $L$. 
We can denote this as the Minkowski sum $K = L + B_\rho$.
\end{lemma}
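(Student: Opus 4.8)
The plan is to take the inner parallel body itself as the candidate, i.e. to set $L := K_{-\rho}$, and to show that this $L$ is nonempty, convex, and satisfies $K = L + B_\rho$. Nonemptiness is immediate: since $\rho < in(K)$ there is a closed ball $B_\rho(x) \subset K$, so $x \in L$. One inclusion of the Minkowski identity is also immediate from the definition of the inner parallel body: if $y$ lies within distance $\rho$ of some $x \in L = K_{-\rho}$, then $y \in B_\rho(x) \subset K$, so $L + B_\rho \subseteq K$. The content of the lemma therefore lies in two assertions: (i) $L$ is convex, and (ii) the reverse inclusion $K \subseteq L + B_\rho$, i.e. that every point of $K$ can be reached by a radius-$\rho$ ball inscribed in $K$ and centered in $L$.

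For the convexity of $L$ — the heart of the matter, and the only place where the thickness hypothesis $\kappa_g \ge 1/\lambda$ is genuinely used — I would argue as follows. Thickness means precisely that $K$ is $1/\lambda$-convex in the sense of the definition above (and since $1/\lambda < 1$, the relevant supporting curves are hypercircles), so $K$ is the intersection $K = \bigcap_\alpha H_\alpha$ of the convex regions $H_\alpha$ bounded by its supporting hypercircles of curvature $1/\lambda$. Because erosion commutes with intersection, $\bigl(\bigcap_\alpha H_\alpha\bigr)_{-\rho} = \bigcap_\alpha (H_\alpha)_{-\rho}$, and an intersection of convex sets is convex, it suffices to show that each $(H_\alpha)_{-\rho}$ is convex. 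Here I would use Lemma \ref{lemma2.5}: the curves equidistant to a hypercircle are again hypercircles about the same underlying geodesic, so $(H_\alpha)_{-\rho}$ is exactly the convex side of the hypercircle at distance $\tanh^{-1}(1/\lambda) - \rho$ from that geodesic, hence convex, provided $\rho$ does not exceed $\rho_{\max} := \tanh^{-1}(1/\lambda)$. Equivalently, one can track the geodesic curvature along the inward normal flow, which obeys the Riccati equation $\kappa' = \kappa^2 - 1$ with solution $\kappa_\rho = (\kappa_0 - \tanh\rho)/(1 - \kappa_0\tanh\rho)$; this stays $\ge 0$ precisely because $\kappa_0 \ge 1/\lambda$ and $\rho \le \rho_{\max}$.

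For the reverse inclusion I would invoke the $\lambda$-concavity $\kappa_g \le \lambda$ together with a Blaschke-type rolling argument. The key numerical coincidence is that $\rho_{\max} = \tanh^{-1}(1/\lambda)$ equals $r_\lambda = \coth^{-1}\lambda$, the radius of the curvature-$\lambda$ circle, so for $\rho \le \rho_{\max}$ one has $\coth\rho \ge \lambda \ge \kappa_g$ everywhere on $\partial K$. Thus $B_\rho$ is rounder than $\partial K$ at every boundary point, and $\lambda$-concavity provides an internally tangent ball of radius $r_\lambda \ge \rho$ there; rolling this ball shows that each boundary point of $K$ lies on some inscribed $B_\rho(x)$ with $x \in L$, while interior points at depth $\ge \rho$ are their own centers. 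Together these give $K \subseteq L + B_\rho$, completing $K = L + B_\rho$.

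The main obstacle is exactly the phenomenon highlighted in the Remark: in $\mathbb{H}^2$ the inner parallel body of a convex body need not be convex, because the region on the \emph{far} side of a hypercircle is not convex, so an eroded supporting region can develop a concavity. Thickness is what rules this out, but only up to the critical depth $\rho_{\max} = \tanh^{-1}(1/\lambda)$, at which a supporting hypercircle's equidistant curve degenerates to its axis geodesic and then flips to the non-convex side. The delicate point I would treat most carefully is therefore the admissible range of $\rho$: the argument above is clean for $0 < \rho < \rho_{\max}$, and since $\lambda$-concavity forces $in(K) \ge r_\lambda = \rho_{\max}$, to cover the full stated range $\rho < in(K)$ one must additionally verify that for $\rho \in (\rho_{\max}, in(K))$ every boundary arc whose curvature is near $1/\lambda$ has already been eroded away before the flip occurs (so that no concavity appears), the fat regions that survive to such depths having curvature $> 1$ and hence being harmless. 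Reconciling this with the bound $\rho < in(K)$ is the one step that requires genuine care.
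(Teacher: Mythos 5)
The paper itself does not argue this lemma directly: it simply invokes Theorem~1 of \cite{strictconvexity} (with $\epsilon=\tanh^{-1}(1/\lambda)=\coth^{-1}\lambda$, the depth at which a curvature-$1/\lambda$ hypercircle collapses onto its axis geodesic; the paper's ``$\tanh^{-1}(\lambda)$'' is evidently a typo), so your attempt reconstructs from scratch what the paper outsources. Your central idea for convexity --- write $K=\bigcap_\alpha H_\alpha$ as an intersection of supporting hypercircle regions of curvature $1/\lambda$, note that erosion commutes with arbitrary intersections, and observe that each $(H_\alpha)_{-\rho}$ is again the convex side of a hypercircle as long as $\rho\le\rho_{\max}=\coth^{-1}\lambda$ --- is sound in spirit and is essentially the content of the cited theorem. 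It does, however, rest on a local-to-global step you do not supply: that the \emph{local} condition $\kappa_g\ge 1/\lambda$ yields a \emph{globally} supporting hypercircle at every boundary point. This is a known supporting-curve theorem for $\mu$-convex bodies, but it is not obvious and must be stated or cited.

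Two further points are genuine gaps. First, your reverse inclusion $K\subseteq K_{-\rho}+B_\rho$ is exactly Blaschke's rolling theorem, which the paper explicitly notes \emph{fails} for general $\lambda$-concave bodies in $\mathbb{H}^2$ and is obtained there as a \emph{consequence} of this lemma; asserting that ``$\lambda$-concavity provides an internally tangent ball of radius $r_\lambda$'' and that ``$\lambda$-concavity forces $in(K)\ge r_\lambda$'' therefore assumes the hardest part of the statement and is circular as written. You would need to prove the rolling/inclusion step directly, using both curvature bounds together, or route everything through the cited theorem as the paper does. Second, the range $\rho\in(\rho_{\max},in(K))$, which you flag as ``requiring genuine care,'' is not merely delicate: a thick $\lambda$-concave body can have $in(K)>\rho_{\max}$ while retaining a long boundary arc of curvature exactly $1/\lambda$ (take a long asymmetric sausage bounded on one side by a hypercircle at distance $\rho_{\max}$ from a geodesic $\gamma$ and on the other by a hypercircle much farther from $\gamma$, capped by arcs of curvature $\lambda$). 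For $\rho$ slightly above $\rho_{\max}$ the eroded set near the middle of that arc is the non-convex side of a hypercircle on the far side of $\gamma$, and any convex $L$ with $L+B_\rho\supseteq K$ would have to contain points near that curve under widely separated points of the arc, whose connecting geodesic rises back toward $\gamma$; so no such convex $L$ exists. That part of the statement cannot be rescued by more care; the lemma should be (and in the paper's applications effectively is) used only for $\rho\le\coth^{-1}\lambda$, which is all that the proofs of Theorems \ref{th1} and \ref{th2} require.
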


\begin{proof}
This is a direct consequence of Theorem 1 in \cite{strictconvexity}, when taking $\epsilon=\tanh^{-1}(\lambda)$. Note that we can apply this result because all the bodies that we are working with have $C^{1,1}$ smooth boundary.     
\end{proof}

\begin{Remark} Lemma~\ref{outerparallel} provides a proof of Blaschke’s rolling theorem for \emph{thick $\lambda$-concave} bodies (see Figure \ref{FigureB}). In $\mathbb{R}^2$, the statement is as follows: let $\gamma$ be a closed, simple, smooth, convex curve in $\mathbb{R}^2$ such that for all $x \in \gamma$, $\kappa_g(x) \leq \lambda$. Then a ball of curvature $\lambda$ rolls along the boundary freely inside the region bounded by $\gamma$. This result was extended to $\mathbb{R}^n$ by \cite{B1,B2}, and later to Riemannian manifolds of bounded curvature in \cite{B3,B4,B5,B6}. Nevertheless, note that this statement does not hold in general for $\lambda$-concave domains in $\mathbb{H}^2$: consider the convex body $Q$ bounded by two equal circular arcs of curvature $\lambda$ and two equal arcs of a hypercircle of curvature $1/\lambda - \epsilon$, with $\epsilon > 0$. By construction, a ball of curvature $\lambda$ cannot roll freely inside $Q$ (see Figure~\ref{Figure9}).
\end{Remark}

\begin{figure} [ht!] 
\centering
\includegraphics[scale=0.47,clip]{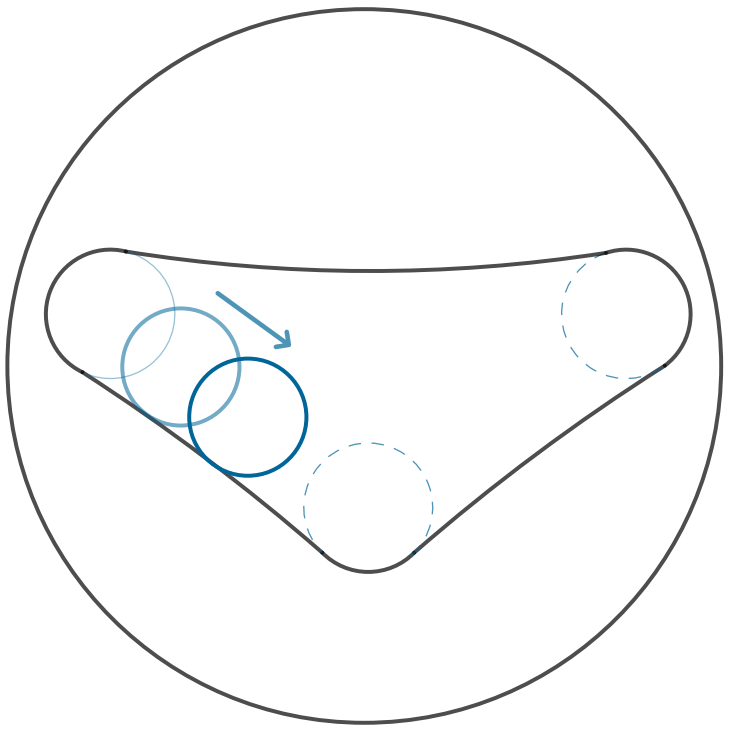}
\caption{Blaschke's rolling theorem for \textit{thick $\lambda$-concave} bodies in $\mathbb{H}^2$}
\label{FigureB}
\end{figure}

\newpage
\subsection{Steiner's formula in the Hyperbolic Space} \label{3.3}
We will now use Steiner's formula to relate the area of a body $K$ with the area of its inner parallel body. According to Steiner's classical result, let $\partial K$ be an arbitrary convex curve in the plane of length $|\partial K|$, bounding the area $|K|$.  
Let $\partial K_{\rho}$ be a curve parallel to $\partial K$ at a distance $\rho$, with length $|\partial K_{\rho}|$ and bounding the area $|K_{\rho}|$. Then,

\[ |\partial K_\rho| = |\partial K| + 2\pi \rho, \quad | K_\rho| = |K| + \rho |\partial K| + \pi \rho^2. \]

In \cite{steiner}, the author generalizes these formulae to simple, closed, and differentiable curves lying in spaces of constant curvature and satisfying certain curvature boundary constraints. Nevertheless, we are unable to apply this formula in our context, since $\partial K$ is not necessarily $C^{2}$. In \cite{sphericaleuclidean, KLAIN2007143}, the authors extend these formulae to compact convex sets in the sphere and in Euclidean space using valuations, and later \cite{hyperbolicsteiner} applies the same technique to generalize the result to hyperbolic space.

\begin{lemma} [Steiner formula for compact convex bodies in the hyperbolic space \cite{hyperbolicsteiner}] \label{steinerr} Let $K$ be a compact convex body in $\mathbb{H}^2$, and let $\rho\geq 0$, then    
\begin{equation} \label{steinerformula}
|K_{\rho}| = |\partial K| \sinh\left(\rho \right) + 2\pi \left(\cosh\left(\rho \right) - 1\right) + |K| \cosh\left(\rho \right).    
\end{equation}
\end{lemma}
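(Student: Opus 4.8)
The plan is to first establish the formula for convex bodies with $C^2$ boundary by a direct tube computation, and then to remove the regularity assumption by approximation. For the smooth case, parametrize $\partial K$ by hyperbolic arclength $\gamma(s)$, let $\nu(s)$ be the outward unit normal, and define the parallel curve $\gamma_t(s) = \exp_{\gamma(s)}(t\,\nu(s))$. The variation field $J(s,t) = \partial_s \gamma_t(s)$ is a Jacobi field along the normal geodesic $t \mapsto \gamma_t(s)$; since $\mathbb{H}^2$ has constant curvature $-1$, it satisfies the scalar Jacobi equation $J'' - J = 0$ with initial data $J(s,0) = 1$ and $J'(s,0) = \kappa_g(s)$ (the latter coming from the Weingarten relation $\nabla_{\gamma'}\nu = \kappa_g\,\gamma'$). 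Hence the length element of the parallel curve is $|\partial_s \gamma_t(s)|\,ds = \bigl(\cosh t + \kappa_g(s)\sinh t\bigr)\,ds$, which stays positive for all $t \geq 0$ because $\kappa_g \geq 0$ by convexity; in particular the outward normal exponential map has no focal points and $\gamma_t$ remains an embedded convex curve.

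Integrating over $\partial K$ gives the length of the outer parallel curve,
\[ |\partial K_t| = \int_{\partial K}\bigl(\cosh t + \kappa_g \sinh t\bigr)\,ds = |\partial K|\cosh t + \sinh t \int_{\partial K}\kappa_g\,ds. \]
I would then invoke Gauss--Bonnet (Gaussian curvature $-1$, Euler characteristic $1$), which reads $\int_{\partial K}\kappa_g\,ds - |K| = 2\pi$, so that $\int_{\partial K}\kappa_g\,ds = 2\pi + |K|$ and therefore $|\partial K_t| = |\partial K|\cosh t + (2\pi + |K|)\sinh t$.

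To pass from the length of the parallel curve to the area of the parallel body, use the first variation of area $\frac{d}{dt}|K_t| = |\partial K_t|$ and integrate from $0$ to $\rho$ with $|K_0| = |K|$, using $\int_0^\rho \cosh t\,dt = \sinh\rho$ and $\int_0^\rho \sinh t\,dt = \cosh\rho - 1$. This yields $|K_\rho| = |K| + |\partial K|\sinh\rho + (2\pi + |K|)(\cosh\rho - 1)$, which rearranges directly into the claimed identity $|K_{\rho}| = |\partial K| \sinh\rho + 2\pi(\cosh\rho - 1) + |K|\cosh\rho$.

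Finally, for a general compact convex body $K$ (where $\partial K$ need not be $C^2$), I would approximate $K$ by a sequence of smooth convex bodies $K^{(n)} \to K$ in the Hausdorff distance, apply the formula to each $K^{(n)}$, and pass to the limit using that area $|\cdot|$ and perimeter $|\partial\cdot|$ are continuous on convex bodies with respect to Hausdorff convergence, together with the continuity of the parallel-body operation $K^{(n)}_\rho \to K_\rho$. This last step is the main obstacle: making the approximation and the continuity of the two functionals rigorous is exactly the point where the lack of $C^2$ regularity bites. The valuation-theoretic framework of \cite{hyperbolicsteiner} offers an alternative route that sidesteps smoothness entirely, by expressing $|K_\rho|$ as a continuous valuation in $K$ and identifying its coefficients as the elementary functions of $\rho$ appearing above.
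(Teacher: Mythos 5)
Your argument is correct, but it takes a genuinely different route from the paper, which does not prove the lemma at all: it simply cites Corollary 4.5 of the valuation-theoretic reference \cite{hyperbolicsteiner}, where $|K_\rho|$ is treated as a continuous valuation and its coefficients are identified without any smoothness hypothesis. Your tube computation for the $C^2$ case is sound: the Jacobi field $J = \cosh t + \kappa_g \sinh t$ with the Weingarten initial condition, the Gauss--Bonnet identity $\int_{\partial K}\kappa_g\,ds = 2\pi + |K|$ for Gaussian curvature $-1$ and $\chi = 1$, and the first variation $\frac{d}{dt}|K_t| = |\partial K_t|$ integrate exactly to the claimed formula, and positivity of $J$ for $\kappa_g \geq 0$, $t \geq 0$ rules out focal points on the outward side. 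The approximation step you flag is indeed where the work lies, but it is standard: outer parallel bodies satisfy $d_H(K_\rho, L_\rho) \leq d_H(K,L)$, and area and perimeter are continuous on hyperbolic convex bodies under Hausdorff convergence, so the formula passes to the limit once one knows smooth convex approximants exist (e.g.\ by smoothing the distance function or taking outer parallel bodies, which are $C^{1,1}$, and mollifying). What your approach buys is a self-contained, geometrically transparent proof that makes visible why the coefficients are $\sinh\rho$, $\cosh\rho - 1$, and $\cosh\rho$; what the paper's citation buys is brevity and the avoidance of any regularity discussion, since the valuation framework handles arbitrary compact convex bodies from the outset.
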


\begin{proof}
The proof of this Lemma can be found in Corollary 4.5 of \cite{hyperbolicsteiner}. 
\end{proof}

\begin{corollary} \label{corollarymain}
Let $K$ be a \textit{thick $\lambda$-concave} body in $\mathbb{H}^2$, and let $0 \leq \rho < in(K)$, then    
\begin{equation} \label{steinerformulainner}
|K_{-\rho}| = -|\partial K| \sinh\left(\rho \right) + 2\pi \left(\cosh\left(\rho \right) - 1\right) + |K| \cosh\left(\rho \right). 
\end{equation}  
and 
\begin{equation}
 |\partial K_{-\rho}| = -\sinh(\rho) \big( |K| + 2\pi\big) + |\partial K| \cosh(\rho)  
\end{equation}
\end{corollary}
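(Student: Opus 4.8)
The plan is to derive Corollary~\ref{corollarymain} directly from the Steiner formula of Lemma~\ref{steinerr} by a substitution argument, exploiting the relationship between $K$ and its inner parallel body. The key observation is that by Lemma~\ref{outerparallel}, a thick $\lambda$-concave body $K$ admits, for each $0 < \rho < in(K)$, a convex set $L$ with $K = L + B_\rho$; equivalently $L = K_{-\rho}$ is convex and $K$ is the outer parallel body of $L$ at distance $\rho$. The idea is therefore to apply the Steiner formula \eqref{steinerformula} not to $K$ but to $L = K_{-\rho}$, treating $K$ as its $\rho$-outer-parallel body.

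First I would set $L = K_{-\rho}$, which is a compact convex body by Lemma~\ref{outerparallel}. Applying Lemma~\ref{steinerr} to $L$ and recalling that its outer parallel body at distance $\rho$ is exactly $K$ (so $L_\rho = K$), I obtain
\begin{equation*}
|K| = |\partial K_{-\rho}| \sinh(\rho) + 2\pi(\cosh(\rho) - 1) + |K_{-\rho}| \cosh(\rho).
\end{equation*}
This is one equation relating the two unknowns $|K_{-\rho}|$ and $|\partial K_{-\rho}|$. To close the system I would use the boundary Steiner formula for $L$, namely $|\partial K| = |\partial (L_\rho)| = |\partial K_{-\rho}| \cosh(\rho) + \bigl(|K_{-\rho}| + 2\pi\bigr)\sinh(\rho)$, which is the differentiated (or boundary-length) companion of \eqref{steinerformula} and is available from the same valuation-based source \cite{hyperbolicsteiner}. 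These two linear equations in $|K_{-\rho}|$ and $|\partial K_{-\rho}|$ can then be solved simultaneously using the identity $\cosh^2(\rho) - \sinh^2(\rho) = 1$, which is precisely what makes the $2\times 2$ system invertible.

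Solving the system yields the two stated expressions: eliminating $|\partial K_{-\rho}|$ gives \eqref{steinerformulainner}, and eliminating $|K_{-\rho}|$ gives the boundary formula $|\partial K_{-\rho}| = -\sinh(\rho)(|K| + 2\pi) + |\partial K|\cosh(\rho)$. The arithmetic is routine once the system is written down—the coefficients collapse cleanly because of the hyperbolic Pythagorean identity.

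The main obstacle is \emph{not} the algebra but ensuring the hypotheses of Lemma~\ref{steinerr} genuinely apply to $L = K_{-\rho}$. The Steiner formula requires $L$ to be a compact convex body, and as the preceding Remarks emphasize, inner parallel bodies in $\mathbb{H}^2$ are generally \emph{not} convex. The entire point of the thickness assumption is to rescue convexity here, and it is exactly Lemma~\ref{outerparallel} that guarantees $K_{-\rho}$ is convex and nonempty for $0 < \rho < in(K)$. Thus the crux of the proof is to invoke Lemma~\ref{outerparallel} correctly so that $K$ is recognized as the outer parallel body $L_\rho$ of the convex set $L = K_{-\rho}$; once that identification is justified, applying the forward Steiner formula to $L$ and inverting the resulting linear system is straightforward. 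I would also note the boundary case $\rho = 0$, where both formulas reduce to the trivial identities $|K_0| = |K|$ and $|\partial K_0| = |\partial K|$, as a consistency check.
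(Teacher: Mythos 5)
Your proposal is correct and follows essentially the same strategy as the paper: both identify $K$ as the outer parallel body $L_\rho$ of the convex set $L = K_{-\rho}$ (convexity being supplied by Lemma~\ref{outerparallel}, which you rightly flag as the crux), write down the forward Steiner relations for $L$, and invert the resulting $2\times 2$ hyperbolic-rotation system using $\cosh^2(\rho)-\sinh^2(\rho)=1$. The one substantive difference is where the second equation of the system comes from. You import the perimeter formula $|\partial L_\rho| = |\partial L|\cosh(\rho) + (|L|+2\pi)\sinh(\rho)$ from the external reference (or as the derivative of \eqref{steinerformula}), whereas Lemma~\ref{steinerr} as stated in the paper only gives the area formula; the paper closes this gap internally by applying \eqref{steinerformula} at distance $\rho+t$, using the semigroup property $K_{\rho+t}=(K_\rho)_t$ together with the hyperbolic addition formulas, and matching the coefficients of $\cosh(t)$ and $\sinh(t)$, which yields the perimeter relation as the second row of the matrix identity without any differentiation or additional citation. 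Your route is perfectly sound provided the perimeter formula is actually available in the cited source (it is), but if you want the argument to be self-contained relative to the paper's stated lemmas, you should either derive the perimeter relation by the semigroup/coefficient-matching trick or justify $\frac{d}{d\rho}|K_\rho| = |\partial K_\rho|$ for convex bodies. Your consistency check at $\rho=0$ is a nice touch not present in the paper.
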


\begin{proof}
Let $0 \leq \rho < in(K)$ and $t>0$. By Lemma \ref{steinerr} we have that
\[
\begin{aligned}
|K_{\rho+t}| &= |\partial K| \sinh(\rho + t) 
  + 2\pi\big(\cosh(\rho + t) - 1\big) 
  + |K| \cosh(\rho + t) \\
&= |\partial K_\rho| \sinh(t) 
  + 2\pi\big(\cosh(t) - 1\big) 
  + |K_\rho| \cosh(t).
\end{aligned}
\]
Rearranging, we get
\[
\begin{aligned}
|\partial K_\rho|\sinh (t) + (|K_\rho|+2\pi)\cosh(t)
&= |\partial K|\big(\cosh(\rho)\,\sinh(t) + \sinh(\rho)\,\cosh(t)\big) \\
&\quad+ (|K|+2\pi)\big(\cosh(\rho)\,\cosh(t) + \sinh(\rho)\,\sinh(t)\big) \\
&= \cosh(t)\Big(\cosh(\rho)(|K|+2\pi) + \sinh(\rho)|\partial K|\Big) \\
&\quad+ \sinh(t)\Big(\sinh(\rho)(|K|+2\pi) + \cosh(\rho)|\partial K|\Big).
\end{aligned}
\]

Since this must hold for all $t$, the terms in front $\cosh(t)$ and $\sinh(t)$ of the formula above must be equal. That is: 

\[
\begin{pmatrix}
|K_\rho| + 2\pi \\[6pt]
|\partial K_\rho|
\end{pmatrix}
=
\begin{pmatrix}
\cosh(\rho) \quad & \quad \sinh(\rho) \\[6pt]
\sinh(\rho) \quad & \quad \cosh(\rho)
\end{pmatrix}
\begin{pmatrix}
|K| + 2\pi \\[6pt]
|\partial K|
\end{pmatrix}.
\]

Rearranging, we get to 

\begin{equation} \label{matrixx1}
\begin{pmatrix}
|K| + 2\pi \\[6pt]
|\partial K|
\end{pmatrix}
=
\begin{pmatrix}
\cosh(\rho) & -\sinh(\rho) \\[6pt]
-\sinh(\rho) & \cosh(\rho)
\end{pmatrix}
\begin{pmatrix}
|K_\rho| + 2\pi \\[6pt]
|\partial K_\rho|
\end{pmatrix}.
\end{equation}

Now, from Lemma \ref{outerparallel} we know that a \textit{thick $\lambda$-concave} body $K$ is the outer parallel body of some convex body $K
_{-\rho}$, for $0 \leq \rho < in(K)$ . Hence, expression (\ref{matrixx1}) becomes 

\begin{equation} \label{matrixx}
\begin{pmatrix}
|K
_{-\rho}| + 2\pi \\[6pt]
|\partial K
_{-\rho}|
\end{pmatrix}
=
\begin{pmatrix}
\cosh(\rho) & -\sinh(\rho) \\[6pt]
-\sinh(\rho) & \cosh(\rho)
\end{pmatrix}
\begin{pmatrix}
|K
| + 2\pi \\[6pt]
|\partial K
|
\end{pmatrix}
\end{equation}
as required. 
\end{proof}

Note that the result of Corollary \ref{corollarymain} is analogous to the result obtained in \cite{steiner} for bodies in $\mathbb{H}^2$ with $C^2$ smooth boundary. 
\par
We are now ready to finalize the proof of Theorems \ref{th1} and \ref{th2}. 
\vspace{2mm}

\textbf{Proof of Theorem \ref{th1}:} Let $K \subset \mathbb{H}^2$ be a \textit{thick $\lambda$-concave} body and $S \subset \mathbb{H}^2$ be a \textit{thick $\lambda$-sausage}. Rearranging formula (\ref{steinerformulainner}) we get 

\begin{equation} \label{steiner2}
| K|= |K_{-\rho}|\operatorname{sech}(\rho)  + |\partial K| \tanh\left(\rho \right) + 2\pi \left(\operatorname{sech}(\rho) -1 \right).
\end{equation}

Let  $\rho_\lambda = \coth^{-1}(\lambda)$. Then, $S_{-\rho_\lambda}$ is a geodesic segment with  $|S_{-\rho_\lambda}|=0$. Then, \(\mbox{$0=|S_{-\rho_\lambda}|\leq |K_{-\rho_\lambda}|$}\). By the assumption of Theorem \ref{th1}, we have \( |\partial S| = |\partial K| \). Then,by  applying formula (\ref{steiner2}), it follows that \( |S| \leq |K| \), as required.
\newline
To prove the other direction, notice that if $|K_{-\rho_\lambda}|=0$ then  $|K_{-\rho_\lambda}|$ is a convex region with null area and, hence, a geodesic segment. Moreover, \(K_{-\rho_\lambda}\) is a geodesic segment whenever \(K\) is a \emph{thick \(\lambda\)-sausage body}, as defined in Definition~\ref{thickk}.

\vspace{2mm}

\textbf{Proof of Theorem \ref{th2}:} Substituting $\rho = \coth^{-1} (\lambda)$ in equation (\ref{steiner2}), we get that the area of the \textit{thick $\lambda$-sausage} is

\begin{equation} \label{equality}
|S| = \frac{|\partial S|}{\lambda} + 2\pi \left( 1-\sqrt{
1-\frac{1}{\lambda^2}} \right). 
\end{equation}

Therefore, for the area of a  \textit{thick $\lambda$-concave} body $K$ satisfies
\begin{equation} \label{lastequality}
|K| \geq \frac{|\partial K|}{\lambda} + 2\pi \left( 1-\sqrt{
1-\frac{1}{\lambda^2}} \right).
\end{equation}

Applying the appropriate scaling, when $K \subset \mathbb{H}^2(-c^2)$,  (\ref{lastequality}) becomes

\[ |K| \geq \frac{|\partial K|}{\lambda} + \frac{2\pi}{c^2} \left( 1-\sqrt{
1-\frac{c^2}{\lambda^2}} \right).\]

It is easy to check (applying the L'H\^{o}pital Rule twice) that as $c \rightarrow 0$ from the left, equation (\ref{equality}) approaches the result in the Euclidean space proved in \cite{kostya3}. This concludes the proof of one direction of Theorem \ref{th2}. The other direction is trivial.

\bibliographystyle{unsrt}
\bibliography{main.bib}

\end{document}